\newtheorem{thm}{Theorem}
\newtheorem{lem}[thm]{Lemma}
\newtheorem{prob}[thm]{Problem}
\title[Embedding central extensions into wreath products]{
Embedding central extensions of simple \\ linear groups into wreath products}
\author{Andrei V. Zavarnitsine}
\address{\textup{\scriptsize
Andrei V. Zavarnitsine\\
Sobolev Institute of Mathematics\\
4, Koptyug av.\\
630090, Novosibirsk, Russia
}}
\email{zav@math.nsc.ru}
\begin{document}
\begin{abstract} We find a criterion for the embedding of a nonsplit central extension of $\mathrm{PSL}_n(q)$ with
kernel of prime order into
the permutation wreath product that corresponds to the action on the projective space.

{\sc Keywords:} finite simple groups, permutation module, central cover, group cohomology.

{\sc MSC2010:} 20D06,20J06
\end{abstract}
\maketitle

\section{Introduction}

A group $B$ included in a short exact sequence of groups
$$
1\to A\stackrel{\iota}{\to}B\to C\to 1
$$
is called an extension of $A$ by $C$ and denoted by $A.C$ in general, or by $A\leftthreetimes C$ if it is split. The extension
$B$ is {\em central} if $\iota(A)\leqslant \mathrm{Z}(B)$.

We write $\mathbb{Z}_n$ for a cyclic group of order $n$ and $(m,n)$ for $\mathrm{gcd}(m,n)$.

This note concerns the following

\begin{prob}\label{mp} Let $G=\operatorname{PSL}_n(q)$, where $q$ is a prime power, and let $r$ be a prime divisor of $(n,q-1)$.
Does the  permutation wreath product $\mathbb{Z}_r\wr_\rho G$ contain a subgroup isomorphic to the nonsplit central extension
$\mathbb{Z}_r.G$, where $\rho$ is the natural permutation representation of $G$ on the points of the projective space
$\mathbb{P}^{n-1}(q)$? \end{prob}

We remark that the nonsplit extension $\mathbb{Z}_r.G$ mentioned in Problem \ref{mp} is unique up to isomorphism and is a quotient of
$\operatorname{SL}_n(q)$. This problem is a generalization of the one raised in \cite[p.\,67]{mas}, where the case $n=r$ is
considered. The case $n=2$ was studied in \cite{zse}, where it was shown that the
embedding holds if and only if $q\equiv -1\mod 4$. We generalize this result by proving the following

\begin{thm}\label{main}
In the notation of Problem \ref{mp}, the nonsplit central extension $\mathbb{Z}_r.G$ is embedded into $\mathbb{Z}_r\wr_\rho G$ if
and only if $r$ does not divide $(q-1)/(n,q-1)$.
\end{thm}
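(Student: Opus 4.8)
The plan is to translate the embedding question into one about a connecting homomorphism in group cohomology, and then to localize that computation at a point stabilizer. Write $M=\mathbb{Z}_r^{\Omega}$ for the base group of $W:=\mathbb{Z}_r\wr_\rho G$, regarded as the permutation $\mathbb{F}_r G$-module on $\Omega=\mathbb{P}^{n-1}(q)$, so that $W=M\rtimes G$. First I would show that any subgroup $H\leqslant W$ with $H\cong \mathbb{Z}_r.G$ (the nonsplit cover) necessarily satisfies $H\cap M=M^{G}$. Indeed, the projection $W\to G$ restricted to $H$ has image $G$ and kernel $H\cap M$ of order $r$ (the only possibilities, since $\mathbb{Z}_r.G$ is quasisimple and can embed neither in the abelian group $M$ nor in $G$); this kernel is the center of $H$, hence is centralized by coset representatives mapping onto $G$, which forces $H\cap M\leqslant M^{G}$. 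As $G$ is transitive on $\Omega$, $M^{G}=\langle\mathbf 1\rangle\cong\mathbb{Z}_r$ is the group of constant functions, so $H\cap M=M^{G}=:Z$, and $Z$ is central in $W$.

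Consequently the image $\bar H=HZ/Z$ is a complement to $M/Z$ in $(M/Z)\rtimes G$, while the preimage in $W$ of any such complement is a central extension of $G$ by $Z$. Classifying complements by $H^{1}(G,M/Z)$ and central extensions by $H^{2}(G,Z)$, this correspondence is precisely the connecting map $\delta\colon H^{1}(G,M/Z)\to H^{2}(G,Z)$ of $0\to Z\to M\to M/Z\to 0$, the preimage being nonsplit exactly when $\delta$ attains the relevant class. Writing $\gamma\in H^{2}(G,\mathbb{Z}_r)$ for the nonzero class of the nonsplit cover, exactness gives that the embedding exists iff $\gamma\in\operatorname{im}\delta$, i.e. iff $i_{*}\gamma=0$ in $H^{2}(G,M)$, where $i\colon Z\hookrightarrow M$ is the diagonal inclusion. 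Now I would invoke Shapiro's lemma: since $M=\mathbb{F}_r[G/P]=\mathrm{Ind}_P^{G}\mathbb{F}_r$ for $P=\mathrm{Stab}_G(\text{point})$ a maximal parabolic, one has $H^{*}(G,M)\cong H^{*}(P,\mathbb{F}_r)$, and the diagonal $i$ is the unit of the $(\mathrm{Res},\mathrm{Ind})$ adjunction, so $i_{*}$ is identified with the restriction $\mathrm{res}^{G}_{P}$ (dually, the augmentation $M\twoheadrightarrow\mathbb{F}_r$ yields the transfer, with $\mathrm{cor}\circ\mathrm{res}=[G:P]$). Hence the embedding exists iff $\mathrm{res}^{G}_{P}\gamma=0$, i.e. iff the cover $\mathbb{Z}_r.G$ splits over the point stabilizer $P$.

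It remains to decide when this splitting occurs. Write $P=U\rtimes\bar L$ with unipotent radical $U$ of order $q^{n-1}$ and Levi $\bar L$ the image of $\mathrm{GL}_{n-1}(q)$ in $G$. As $r\mid q-1$ is coprime to $|U|$, the Lyndon--Hochschild--Serre spectral sequence collapses and $\mathrm{res}^{G}_{P}\gamma=0$ iff $\gamma$ restricts trivially to $\bar L$, i.e. iff the preimage $\mathrm{GL}_{n-1}(q)/\mu_{d/r}$ of $\bar L$ in the cover splits over its central subgroup $\mathbb{Z}_r=\mu_d/\mu_{d/r}$, where $d=(n,q-1)$ and $\mu_k$ denotes the scalars of order dividing $k$. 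A splitting is a retraction onto this central $\mathbb{Z}_r$; since $\mathrm{GL}_{n-1}(q)^{\mathrm{ab}}\cong\mathbb{F}_q^{*}$ via the determinant (the exceptional cases of $\mathrm{SL}_{n-1}$ having $d=1$), such a retraction must factor through $\det$, and the key point is that $\det$ restricts to an automorphism of $\mu_d$ because $\gcd(n-1,d)=1$ (a common prime divides both $d\mid n$ and $n-1$). Unwinding, the required homomorphism $\mathbb{F}_q^{*}\cong\mathbb{Z}_{q-1}\to\mathbb{Z}_r$ extending the projection on $\mu_d$ exists iff $\gcd\big((q-1)/d,\,r\big)=1$, that is, iff $r\nmid (q-1)/(n,q-1)$.

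I expect the main obstacle to be this last step: pinning down exactly when the restricted central extension over the Levi splits, where the nested scalar subgroups $\mu_{d/r}\leqslant\mu_d$ interact with the determinant and the coprimality $\gcd(n-1,d)=1$ must be exploited to convert the splitting criterion into the stated arithmetic condition. By contrast the cohomological reductions, though essential, are formal once the two identifications $H\cap M=M^{G}$ and $i_{*}\leftrightarrow\mathrm{res}^{G}_{P}$ are secured.
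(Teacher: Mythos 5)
Your proposal is correct, and after the common first reduction it takes a genuinely different route from the paper's. Both arguments begin the same way: a copy of $\mathbb{Z}_r.G$ inside $\mathbb{Z}_r\wr_\rho G=V\leftthreetimes G$ must meet the base group in the unique trivial submodule $I$, and such a subextension exists precisely when the class $\gamma$ of the cover lies in the kernel of the map $H^2(G,I)\to H^2(G,V)$ induced by $I\hookrightarrow V$ (your $i_*$ is the paper's $\varphi$, and your complement/connecting-map discussion is the content of Lemma \ref{bas} combined with the long exact sequence). The paper then computes the \emph{order} of $\operatorname{Ker}\varphi$ by splicing the long exact sequences of $0\to I\to V\to V^0\to 0$ and $0\to U\to V^0\to I\to 0$, which requires the nontrivial external input $H^1(G,U)\cong\mathbb{Z}_r^2$ (Lemma \ref{1coh}, due to Bray and Burichenko) together with $H^1(G,V)$ computed via Shapiro's lemma from the abelianization of a point stabilizer (Lemma \ref{v1coh}). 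You instead apply Shapiro's lemma one degree higher: since $V\cong\operatorname{Ind}_P^G\mathbb{F}_r$ and the diagonal is the unit of the adjunction, $i_*$ becomes $\operatorname{res}^G_P$, so the question is exactly whether $\operatorname{SL}_n(q)/Z_0$ splits over the preimage of a point stabilizer; you settle this by killing the unipotent radical (coprime to $r$) and running a determinant/abelianization computation on the Levi, where $\gcd(n-1,(n,q-1))=1$ does the decisive work, and your final arithmetic ($r\nmid(q-1)/(n,q-1)$ iff the required retraction onto $\mu_d/\mu_{d/r}$ extends to $\mathbb{F}_q^*/\mu_{d/r}$) checks out. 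Your route avoids the $H^1(G,U)$ input entirely, gives the criterion a transparent group-theoretic meaning (the cover embeds iff it splits over a point stabilizer), and treats $n=2$ uniformly, whereas the paper must defer $n=2$ to \cite{zse} because $U$ is irreducible only for $n\geqslant 3$; note also that your degree-one analogue of this very computation is already what the paper uses to prove Lemma \ref{v1coh}, and its Lemma \ref{bur} argument with a diagonal element of order $r^2$ is a concrete shadow of your non-splitting over $P$. The only points to make fully precise in a write-up are the standard identifications you flag yourself (the diagonal as the unit inducing restriction, and the complement-to-preimage correspondence as $\delta$), both of which can be cited rather than reproved.
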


The main method that we use is based on some cohomological considerations and is similar to that of \cite{zse}.

\section{Preliminaries}

Let $G$ be a group and let $L,M$ be right $G$-modules. Suppose
\begin{equation}\label{emb}
0\to L \to M
\end{equation}
and
$$
1\to M \to E \stackrel{\pi}\to G \to 1
$$
are exact sequences of modules and groups, where the conjugation action of $E$ on $M$ agrees with the $G$-module structure, i.\,e.
$m^e=m\cdot\pi(e)$ for all $m\in M$ and $e\in E$, and we identify $M$ with its image in $E$. A subgroup $S\leqslant E$ such that
\begin{equation}\label{hpro}
S\cap M = L, \qquad SM = E,
\end{equation}
where we also identify $L$ with its image in $M$, which is itself an extension of $L$ by $G$, will be called a {\em subextension}
of $E$ that corresponds to the embedding (\ref{emb}).

It is known \cite{gru} that the equivalence classes of extensions of $L$ by $G$ are in a one-to-one correspondence with (thus are
{\em defined by}) the elements of the second cohomology group $H^2(G,L)$. Furthermore, the sequence (\ref{emb}) gives rise to a
homomorphism
\begin{equation}\label{hhom}
H^2(G,L)\stackrel{\varphi}{\to} H^2(G,M).
\end{equation}

\begin{lem}\cite[Lemma 2]{zse} \label{bas} Let $L,M$ be $G$-modules and $E$ an extension as specified above. Let
$\bar\gamma\in H^2(G,M)$ be the element that defines $E$. Then
the set of elements of $H^2(G,L)$ that define the subextensions $S$ of $E$ corresponding to the embedding {\rm (\ref{emb})}
coincides with $\varphi^{-1}(\bar\gamma)$, where $\varphi$ is the induced homomorphism {\rm (\ref{hhom})}. In particular, $E$ has
such a subextension $S$ if and only if $\bar\gamma\in \operatorname{Im}\varphi$.
\end{lem}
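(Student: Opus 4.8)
The plan is to translate everything into the language of $2$-cocycles with respect to set-theoretic sections and then read off the correspondence. First I would fix, once and for all, a normalized section $t\colon G\to E$ of $\pi$ (so that $t(1)=1$ and $\pi(t(g))=g$), identify $M$ with its image in $E$, and form the associated factor set $f_E(g,h)=t(g)t(h)t(gh)^{-1}\in M$. By the classification of extensions with abelian kernel recalled before the lemma, the class $[f_E]\in H^2(G,M)$ equals $\bar\gamma$, and the conjugation rule $m^e=m\cdot\pi(e)$ guarantees that $f_E$ is a cocycle for the given right $G$-module structure on $M$. On the level of cochains, $\varphi$ is induced by the inclusion $L\to M$, so $\varphi[f]=[\iota\circ f]$ for every $L$-valued cocycle $f$; this is the identity I will use throughout.

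For the forward inclusion, let $S\leqslant E$ be a subextension, so that \eqref{hpro} holds. Since $\pi(S)=\pi(SM)=\pi(E)=G$ and $\ker(\pi|_S)=S\cap M=L$, the restriction $\pi|_S$ realizes $S$ as an extension $1\to L\to S\to G\to 1$, and I may choose a normalized section $s\colon G\to S$ of $\pi|_S$. Its factor set $f_S(g,h)=s(g)s(h)s(gh)^{-1}$ lands in $S\cap M=L$, because this element lies in $S$ and maps to $1$ under $\pi$; thus $[f_S]\in H^2(G,L)$ is precisely the class defining $S$. The point is that $s$ is simultaneously a section of $\pi$, so $\iota\circ f_S$ is a factor set for $E$ itself and therefore represents $\bar\gamma$. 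Hence $\varphi[f_S]=[\iota\circ f_S]=\bar\gamma$, which shows that every class defining a subextension lies in $\varphi^{-1}(\bar\gamma)$.

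For the converse, I would start from a class $[f]\in\varphi^{-1}(\bar\gamma)$, represented by a normalized $L$-valued cocycle $f$. Then $\iota\circ f$ and $f_E$ both represent $\bar\gamma$ in $H^2(G,M)$, so they differ by a coboundary $\delta c$ for some normalized $c\colon G\to M$. Adjusting the section to $t'(g)=c(g)^{-1}t(g)$ yields a section of $\pi$ whose factor set is exactly $\iota\circ f$, i.e.\ takes all its values in $L$. I then set $S=\{\,\ell\,t'(g):\ell\in L,\ g\in G\,\}$. The crucial structural fact is that $L$ is a $G$-submodule of $M$: conjugation by $t'(g)$ sends $L$ into $L$, and since $t'(g)t'(h)=f(g,h)\,t'(gh)$ with $f(g,h)\in L$, the set $S$ is closed under multiplication and inversion, hence a subgroup. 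Finally I verify \eqref{hpro}: an element $\ell\,t'(g)$ lies in $M=\ker\pi$ exactly when $g=1$, and then it equals $\ell\in L$, giving $S\cap M=L$; and $\pi(S)=G$ together with $\ker\pi=M$ gives $SM=E$. Thus $S$ is a subextension whose defining class is $[f]$, proving that $\varphi^{-1}(\bar\gamma)$ is contained in the set of classes defining subextensions. Combining the two inclusions yields the asserted equality, and the final assertion is immediate, since a subextension exists if and only if $\varphi^{-1}(\bar\gamma)\neq\varnothing$, i.e.\ if and only if $\bar\gamma\in\operatorname{Im}\varphi$.

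I expect the only genuinely delicate step to be the converse construction: one must check that the adjustment of the section is compatible with the chosen normalizations and, above all, that the closure of $S$ under the group operations really does reduce to the $G$-invariance of $L$ inside $M$. The sign and side conventions for the coboundary $\delta c$, arising from the right action $m\cdot g$, have to be fixed consistently with the rule $m^e=m\cdot\pi(e)$; once this is pinned down, the verifications of \eqref{hpro} are routine.
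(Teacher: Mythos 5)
Your factor-set argument is correct, and since the paper itself gives no proof of this lemma but imports it from \cite{zse}, there is no in-paper argument to diverge from: what you wrote is the standard (and presumably the original) proof, with the two key points in place --- in the forward direction, that a section of $\pi|_S$ is simultaneously a section of $\pi$, so the $L$-valued factor set of $S$ also computes $\bar\gamma$; and in the converse, that adjusting the section of $\pi$ by a $1$-cochain forces the factor set into $L$, after which $S=L\,t'(G)$ is a subgroup precisely because $L$ is a $G$-submodule of $M$ and satisfies (\ref{hpro}). The one bookkeeping point, which you yourself flag, is that for a \emph{right} $G$-module the factor set should be placed on the right, $t(g)t(h)=t(gh)f(g,h)$, so that the cocycle and coboundary identities come out in the standard right-action form; with your left placement $f(g,h)=t(g)t(h)t(gh)^{-1}$ the rule $m^e=m\cdot\pi(e)$ makes the action enter as $\cdot\,g^{-1}$, which is fixable but would clutter the converse computation.
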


We now present a slight generalization of the argument in \cite[Section 7]{zse}.

Denote by $\mathbb{F}_q$ a finite field of order $q$. The order of a group element $g$ will be denoted by $|g|$.

Let $G$ be a finite group, $X$ a set, and let $\rho$ be a permutation representation of $G$ on $X$. For a prime $p$, we consider
the permutation $\mathbb{F}_pG$-module $V$ that corresponds to $\rho$ with basis (identified with) $X$ and its trivial submodule
$I$ spanned by $\sum_{x\in X}x$. Clearly, the wreath product $\mathbb{Z}_p\wr_\rho G$ is the natural split extension $V\leftthreetimes G$.

\begin{lem}\label{bur} In the above notation, if a central extension $S=\mathbb{Z}_p.G$ is a subextension of $V\leftthreetimes G$ that
corresponds to the embedding of $\mathbb{F}_pG$-modules $I\to V$ then $S$ has no element $s$ that satisfies the following three
conditions
\begin{enumerate}
  \item[$(i)$] $|s|=p^2$,
  \item[$(ii)$] $|g|=p$, where $g\in G$ is the image of $s$ under the natural epimorphism $S \to G$.
  \item[$(iii)$] $\rho(g)$ has a fixed point on $X$.
\end{enumerate}
\end{lem}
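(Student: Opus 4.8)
The plan is to argue by contradiction. Suppose $s\in S$ satisfies $(i)$, $(ii)$, $(iii)$, and let $g\in G$ be its image, so $|g|=p$ by $(ii)$. Identifying $V$ with the base subgroup of $E=V\leftthreetimes G$, I would write $s=vg$ with $v\in V$ and compute the $p$th power of $s$ inside the semidirect product. Using the right-action convention $m^e=m\cdot\pi(e)$, a straightforward induction on $k$ gives
\[
s^{k}=\Big(\sum_{i=0}^{k-1}v\cdot g^{\,i}\Big)\,g^{k},
\]
where $V$ is written additively; in particular, taking $k=p$ and using $g^{p}=1$ from $(ii)$ yields that $s^{p}=\sum_{i=0}^{p-1}v\cdot g^{\,i}$ lies in $V$. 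This is precisely the norm of $v$ under the action of $\langle g\rangle$ (the sign of the exponent being immaterial, since the powers $g^{\,i}$ range over all of $\langle g\rangle$).

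Next I would locate $s^{p}$ inside the trivial submodule $I$. Since $s\in S$ and $s^{p}\in V$, the subextension property $S\cap V=I$ forces $s^{p}\in I$, so $s^{p}=c\sum_{x\in X}x$ for some scalar $c\in\mathbb{F}_p$. Because $V$ has exponent $p$, we get $s^{p^{2}}=(s^{p})^{p}=0$ automatically, whence $|s|=p^{2}$ is equivalent to $s^{p}\neq0$; thus condition $(i)$ gives $c\neq0$.

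Finally I would exploit $(iii)$ by comparing coordinates at a fixed point. Writing $v=\sum_{x\in X}a_x\,x$ and choosing $x_{0}\in X$ with $x_{0}\cdot g=x_{0}$, the coefficient of $x_{0}$ in each summand $v\cdot g^{\,i}$ equals $a_{x_{0}\cdot g^{-i}}=a_{x_{0}}$, since $g$ fixes $x_{0}$; summing over $i=0,\dots,p-1$ shows that the $x_{0}$-coefficient of $s^{p}$ equals $p\,a_{x_{0}}=0$. On the other hand, from $s^{p}=c\sum_{x}x$ this coefficient equals $c$, so $c=0$, contradicting $c\neq0$. The computation is elementary; the only care needed is to pin down the power formula with the correct right-action convention and to justify the reduction $|s|=p^{2}\Leftrightarrow s^{p}\neq0$. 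The conceptual heart of the argument—and the step I would flag as central—is the observation that a $\rho(g)$-fixed point forces the corresponding coordinate of the norm $s^{p}$ to vanish modulo $p$, which is incompatible with $s^{p}$ being a nonzero multiple of the all-ones vector.
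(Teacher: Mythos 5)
Your proposal is correct and follows essentially the same route as the paper's own proof: write $s$ as a product of $g$ with an element $v\in V$, observe that $s^p$ is the image of $v$ under the norm element $1+g+\cdots+g^{p-1}$ and must equal a nonzero multiple of $\sum_{x\in X}x$ in $I$, and then derive a contradiction by noting that the coefficient at a $\rho(g)$-fixed point of the norm of $v$ is $p\,a_{x_0}=0$. Your extra care about the sign of the exponents and the equivalence $|s|=p^2\Leftrightarrow s^p\neq 0$ is sound and matches what the paper leaves implicit.
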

\begin{proof}
  Assume to the contrary that $s$ is such an element. Denote $t=\sum_{x\in X}x \in I$. Since $S=I.G$, we have $s^p=ct$ for a nonzero $c\in \mathbb{F}_p$,
  and since $S$ is a subextension of $V\leftthreetimes G$ with respect to $I\to V$, there exists $v\in V$ such that $s=gv$.
  Therefore, we have $ct=s^p=(gv)^p=vh$, where
  $$
  h=1+g+\ldots+g^{p-1}.
  $$
  Let $x\in X$ be a fixed point of $\rho(g)$. We can write $v=a_xx+w$, for some $a_x\in \mathbb{F}_p$, where $w=\sum_{y\in X\setminus\{x\}}a_yy$. Clearly, $wh$ is a linear combination
  of elements of $X\setminus\{x\}$, and
  $$
  (a_xx)h=a_x(x+\ldots +x)=a_x\, px=0.
  $$
  Hence, the coefficient of $x$ in $ct=vh$ is zero, which contradicts $c\ne 0$.
\end{proof}

\section{A permutation module for $\operatorname{PSL}_n(q)$}

We henceforth denote $G=\operatorname{PSL}_n(q)$ and fix a prime divisor $r$ of $(n,q-1)$.
The natural permutation action $\rho$ of $G$
on the points of the projective space $\mathcal{P}=\mathbb{P}^{n-1}(q)$ gives rise to a permutation $\mathbb{F}_rG$-module $V$. As every permutation
module, $V$ has a trivial submodule $I$ spanned by $\sum_{x\in \mathcal{P}}x$, and the augmentation submodule $V_0$ that consists of the elements
$\sum_{x\in \mathcal{P}}a_x x$ with $\sum_{x}a_x =0$. Since $\operatorname{dim}V=1+q+\ldots+q^{n-1}\equiv 0 \pmod r$, we
have $I\leqslant V_0$, and the quotient $U=V_0/I$ is known \cite{mor} to be absolutely irreducible whenever $n \geqslant 3$. It was noticed by
various authors \cite{bra,bur} that $U$ is one of the few examples of modules with $2$-dimensional $1$-cohomology, namely we have:

\begin{lem} \label{1coh} In the above notation, $H^1(G,U)\cong\mathbb{Z}_r^2$, whenever $n\ge 3$.
\end{lem}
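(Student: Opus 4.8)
The plan is to compute $H^1(G,U)$ by threading $U$ through the two short exact sequences of $\mathbb{F}_rG$-modules
\[ 0\to V_0\to V\to \mathbb{F}_r\to 0, \qquad 0\to I\to V_0\to U\to 0, \]
(the inclusion $I\leqslant V_0$ being available because $\dim V\equiv 0\pmod r$) and reducing everything to data attached to a point stabiliser $P\leqslant G$. First I would record the degree-$0$ and trivial-coefficient terms: since $G$ is transitive on $\mathcal{P}$ one has $V^G=V_0^G=I\cong\mathbb{F}_r$ and the augmentation $V^G\to\mathbb{F}_r$ is zero (as $|\mathcal{P}|\equiv 0$); since $U$ is a nontrivial absolutely irreducible module, $U^G=0$; and since $G=\operatorname{PSL}_n(q)$ is perfect, $H^1(G,\mathbb{F}_r)=\operatorname{Hom}(G,\mathbb{F}_r)=0$. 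Feeding these into the long exact cohomology sequences yields $\dim H^1(G,V_0)=1+\dim H^1(G,V)$ from the first, and --- using that $H^2(G,V_0)\hookrightarrow H^2(G,V)$, again because $H^1(G,\mathbb{F}_r)=0$ --- the equality $\dim H^1(G,U)=\dim H^1(G,V_0)+\dim\ker\varphi$ from the second, where $\varphi\colon H^2(G,I)\to H^2(G,V)$ is induced by $I\hookrightarrow V$. It thus remains to evaluate $\dim H^1(G,V)$ and $\dim\ker\varphi$.

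For the first, $V=\operatorname{Ind}_P^G\mathbb{F}_r$ is a permutation module, so Shapiro's lemma gives $H^1(G,V)\cong H^1(P,\mathbb{F}_r)=\operatorname{Hom}(P^{\mathrm{ab}},\mathbb{F}_r)$. I would compute $P^{\mathrm{ab}}$ directly: $P$ is the image in $G$ of the line stabiliser $\widehat P\leqslant\operatorname{SL}_n(q)$, whose unipotent radical dies in the abelianisation (for $n\geq 3$ the Levi $\cong\operatorname{GL}_{n-1}(q)$ acts on it with no coinvariants; the degenerate cases $\operatorname{SL}_{n-1}(q)$ nonperfect carry no admissible $r$), so that $\widehat P^{\mathrm{ab}}\cong\mathbb{F}_q^{\times}$ via the Levi determinant. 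Passing to $G$ kills the image of $Z(\operatorname{SL}_n(q))$, which is the subgroup of order $(n,q-1)$. Hence $P^{\mathrm{ab}}$ is cyclic of order $(q-1)/(n,q-1)$, and $\dim H^1(G,V)$ equals $1$ or $0$ according as $r$ does or does not divide $(q-1)/(n,q-1)$.

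For $\ker\varphi$, the composite of $\varphi$ with the Shapiro isomorphism is the restriction map $\operatorname{res}^G_P\colon H^2(G,\mathbb{F}_r)\to H^2(P,\mathbb{F}_r)$, so $\ker\varphi=\ker\operatorname{res}^G_P$. Since $G$ is perfect, $H^2(G,\mathbb{F}_r)\cong\operatorname{Hom}(M(G),\mathbb{F}_r)\cong\mathbb{F}_r$, generated by the class $\gamma_0$ of the nonsplit cover $\mathbb{Z}_r.G$; thus $\ker\varphi$ is $0$ or $\mathbb{F}_r$ according as $\operatorname{res}^G_P\gamma_0$ is nonzero or zero, that is, according as the induced central extension $\mathbb{Z}_r.P$ is nonsplit or split. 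I would settle this dichotomy group-theoretically: writing $\mathbb{Z}_r.P=\widehat P/Z_0$ with $Z_0\leqslant Z(\operatorname{SL}_n(q))$ of index $r$, the abelianisation is cyclic of order $r\cdot(q-1)/(n,q-1)$ with the central $\mathbb{Z}_r$ mapping onto its unique subgroup of order $r$; this subgroup is a direct summand exactly when $r\nmid(q-1)/(n,q-1)$. A splitting forces it to be a direct summand, which rules out the split case when $r\mid(q-1)/(n,q-1)$, while in the complementary case a complement is produced by Gasch\"utz's theorem (reduce to a Sylow $r$-subgroup, in which the central $\mathbb{Z}_r$ avoids the Frattini subgroup). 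Consequently $\dim\ker\varphi$ equals $1$ or $0$ according as $r$ does not or does divide $(q-1)/(n,q-1)$.

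Combining the three relations gives $\dim H^1(G,U)=1+\dim H^1(G,V)+\dim\ker\varphi=1+1=2$ in either case, whence $H^1(G,U)\cong\mathbb{Z}_r^2$. I expect the main obstacle to be the \emph{complementarity} of the last two computations: individually $\dim H^1(G,V)$ and $\dim\ker\varphi$ depend on whether $r\mid(q-1)/(n,q-1)$, and the crux is that they always sum to $1$, a fact essentially equivalent to the splitting criterion underlying Theorem \ref{main}. A secondary technical point is to verify that the $r$-part of the Schur multiplier $M(G)$ is cyclic, so that $H^2(G,\mathbb{F}_r)$ is genuinely one-dimensional; this needs separate attention for the finitely many $\operatorname{PSL}_n(q)$ with exceptional multiplier.
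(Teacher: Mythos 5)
Your argument is correct, but it is worth noting up front that the paper does not prove this lemma at all: the statement is quoted from Bray and Burichenko, and the paper's own logic runs in the opposite direction --- it feeds the known value $\dim H^1(G,U)=2$, together with Lemma \ref{v1coh}, into the long exact sequences to compute $\operatorname{Ker}\varphi$ and hence decide the embedding. You invert this: you compute $\dim H^1(G,V)$ exactly as in Lemma \ref{v1coh} (Shapiro plus $P^{\mathrm{ab}}\cong\mathbb{Z}_{(q-1)/(n,q-1)}$), you evaluate $\operatorname{Ker}\varphi$ independently by identifying it with $\operatorname{Ker}\operatorname{res}^G_P$ on $H^2(-,\mathbb{F}_r)$ and settling the splitting of $\widehat P/Z_0$ over its centre (the abelianisation obstruction in one direction, Gasch\"utz in the other --- both steps check out, including the fact that the central $\mathbb{Z}_r$ survives in the maximal elementary abelian $r$-quotient and so avoids the Frattini subgroup of a Sylow $r$-subgroup), and you then solve for $\dim H^1(G,U)$. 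There is no circularity, since your evaluation of $\operatorname{Ker}\varphi$ uses only group theory, and your bookkeeping through the two short exact sequences is right ($V^G=V_0^G=I$, the augmentation vanishing on $I$ because $|\mathcal{P}|\equiv 0$, $U^G=0$, $H^1(G,\mathbb{F}_r)=0$, and the injectivity of $H^2(G,V_0)\to H^2(G,V)$). What your route buys is self-containedness: it simultaneously reproves Theorem \ref{main} via the classical criterion that a central extension embeds in the wreath product if and only if its restriction to a point stabiliser splits. The price is two extra inputs the paper never needs: the (standard, but unproved by you) compatibility of Shapiro's isomorphism with the orbit-sum inclusion $I\to V$, which is what turns $\varphi$ into $\operatorname{res}^G_P$, and the cyclicity of the $r$-part of the Schur multiplier, which you rightly flag; the latter does hold, since the only $\operatorname{PSL}_n(q)$ with $n\geq 3$, $(n,q-1)>1$ and exceptional multiplier is $\operatorname{PSL}_3(4)$, whose $3$-part is $\mathbb{Z}_3$.
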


We will also require the $1$-cohomology of $V$.

\begin{lem} \label{v1coh} Let $V$ be the above-defined permutation module. Then we have
$$
H^1(G,V)\cong\left\{
\begin{array}{rl}
  \mathbb{Z}_r, & r \ \text{divides}\ (q-1)/(n,q-1), \\
  0, & \text{otherwise}.
\end{array}
\right.
$$
\end{lem}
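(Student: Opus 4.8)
The plan is to identify $V$ with an induced module and apply the Eckmann--Shapiro lemma. Since $G$ acts transitively on $\mathcal{P}$ with point stabilizer a maximal parabolic subgroup $P$ (the stabilizer of a line in $\mathbb{F}_q^n$), the permutation module $V=\mathbb{F}_r\mathcal{P}$ is the induced module $\operatorname{Ind}_P^G\mathbb{F}_r$, and as $[G:P]$ is finite this coincides with the coinduced module. Shapiro's lemma then gives $H^1(G,V)\cong H^1(P,\mathbb{F}_r)$, and since $\mathbb{F}_r$ is a trivial $P$-module this is just $\operatorname{Hom}(P,\mathbb{Z}_r)$. The whole computation is thereby reduced to determining the $r$-part of the abelianization of the point stabilizer $P$.

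Next I would describe $P$ explicitly. Lifting to $\operatorname{SL}_n(q)$, the stabilizer $\widetilde P$ of the line $\langle e_1\rangle$ is a semidirect product $\widetilde P=Q\rtimes L$, where $Q$ is the unipotent radical (an elementary abelian $p$-group, $p=\operatorname{char}\mathbb{F}_q$) and the Levi complement $L$ is isomorphic to $\operatorname{GL}_{n-1}(q)$; one has $P=\widetilde P/Z$ with $Z$ the center of $\operatorname{SL}_n(q)$, a cyclic group of order $d=(n,q-1)$. Because $r\mid q-1$ we have $r\ne p$, so every homomorphism $\widetilde P\to\mathbb{Z}_r$ annihilates the $p$-group $Q$ and hence factors through $\widetilde P/Q\cong\operatorname{GL}_{n-1}(q)$. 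Using that the determinant induces $\operatorname{GL}_{n-1}(q)^{\mathrm{ab}}\cong\mathbb{F}_q^{\times}$ (the only exceptional pair $(n-1,q)=(2,2)$ being excluded, since $q\geqslant 3$ as $r\mid q-1$), I would obtain $\operatorname{Hom}(\widetilde P,\mathbb{Z}_r)\cong\operatorname{Hom}(\mathbb{F}_q^{\times},\mathbb{Z}_r)\cong\mathbb{Z}_r$, the isomorphism being realized by composing a character of $\mathbb{F}_q^{\times}$ with the map $\widetilde P\to L\xrightarrow{\det}\mathbb{F}_q^{\times}$.

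Finally, $\operatorname{Hom}(P,\mathbb{Z}_r)$ consists precisely of those characters of $\widetilde P$ that are trivial on $Z$, so I must compute the image of $Z$ under $\widetilde P\to\mathbb{F}_q^{\times}$. A scalar $\lambda I\in Z$ (with $\lambda^n=1$) maps to $\det(\lambda I_{n-1})=\lambda^{\,n-1}$; since $d\mid n$ we have $\gcd(d,n-1)=1$, so $\lambda\mapsto\lambda^{n-1}$ is an automorphism of $Z$ and the image of $Z$ is the full subgroup $C_d\leqslant\mathbb{F}_q^{\times}$ of order $d$. Consequently $\operatorname{Hom}(P,\mathbb{Z}_r)\cong\operatorname{Hom}(\mathbb{F}_q^{\times}/C_d,\mathbb{Z}_r)\cong\operatorname{Hom}(\mathbb{Z}_{(q-1)/d},\mathbb{Z}_r)$, which is $\mathbb{Z}_r$ when $r\mid(q-1)/d$ and trivial otherwise. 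This is exactly the asserted dichotomy, with $d=(n,q-1)$.

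I expect the main obstacle to be the last step: correctly tracking the image of the center through the abelianization, where the numerical fact $\gcd(d,n-1)=1$ (a consequence of $d\mid n$) is what guarantees that $Z$ surjects onto $C_d$ rather than onto a proper subgroup. One could instead try the long exact sequences attached to $0\to I\to V_0\to U\to 0$ and $0\to V_0\to V\to\mathbb{F}_r\to 0$, using $H^1(G,U)\cong\mathbb{Z}_r^2$ from Lemma~\ref{1coh} together with the perfectness of $G$; but this route forces one to compute the rank of a connecting homomorphism $H^1(G,U)\to H^2(G,I)$, which is considerably more delicate than the Shapiro calculation above.
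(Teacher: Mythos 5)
Your proof is correct and follows essentially the same route as the paper: both apply Shapiro's lemma to reduce $H^1(G,V)$ to $\operatorname{Hom}(H,\mathbb{Z}_r)$ for a point stabilizer $H$ and then compute the abelianization of $H$. The only difference is that the paper reads off $H/H'\cong\mathbb{Z}_{(q-1)/(n,q-1)}$ from Mazurov's description of $H$ as having shape $\operatorname{ASL}_{n-1}(q).\mathbb{Z}_{(q-1)/(n,q-1)}$ together with the perfectness of $\operatorname{ASL}_{n-1}(q)$ for $n\geqslant 3$, whereas you derive it directly by lifting to $\operatorname{SL}_n(q)$, killing the unipotent radical, and tracking the image of the center under the determinant of the Levi factor --- a self-contained calculation that reaches the same conclusion.
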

\begin{proof} We will assume that $n\geqslant 3$, as the claim holds for $n=2$ by \cite[Lemma 12]{zse}.
Since the action of $G$ on $\mathcal{P}$ is transitive, we have $V\cong T^G$, where $T$ is the principal $\mathbb{F}_rH$-module for
a point stabilizer $H$. By Shapiro's lemma \cite[\S 6.3]{wei}, we have $H^1(G,V)\cong H^1(H,T)\cong \operatorname{Hom}(H/H',T)$.
The structure of $H$ is known \cite[Section 2]{maz} and has the shape $\operatorname{ASL}_{n-1}(q).\mathbb{Z}_{(q-1)/(n,q-1)}$.
Since $n\ge 3$ and $(n,q-1)>1$, the group $\operatorname{ASL}_{n-1}(q)$ is perfect and so $H/H'\cong \mathbb{Z}_{(q-1)/(n,q-1)}$.
As $T$ is cyclic of order $r$, the claim follows.
\end{proof}

\section{Proof}

We can now prove Theorem \ref{main}. Due to \cite{zse}, we may assume that $n\geqslant 3$. We denote by $S$ the nonsplit central extension
$\mathbb{Z}_r. G$. Since $G$ is simple, the only possibility for $S$ to be a subgroup of the extension $V\leftthreetimes G$ is if $S$ is
its subextension, and since $I$ is the unique trivial submodule of $V$, this subextension must be with respect to the embedding $I\to V$.
Being split, the extension $V\leftthreetimes G$ is defined by the zero element of $H^2(G,V)$. Hence, Lemma \ref{bas} implies that
all subextensions of $V\leftthreetimes G$ with respect to $I\to V$
are defined by the elements of $\operatorname{Ker}\varphi$, where $\varphi$ is the induced homomorphism
\begin{equation}\label{phi}
  H^2(G,I)\stackrel{\varphi}{\to} H^2(G,V).
\end{equation}
The short exact sequence of modules
$$
0\to I \to V \to V^0 \to 0,
$$
where $V^0\cong V/I$, gives rise to the long exact sequence
\begin{equation}\label{lc}
H^1(G,I)\to H^1(G,V)\stackrel{\alpha}{\to} H^1(G,V^0)\stackrel{\delta}{\to} H^2(G,I)\stackrel{\varphi}{\to} H^2(G,V),
\end{equation}
which implies that $\operatorname{Ker}\varphi=\operatorname{Im}\delta$. Observe that $H^1(G,I)\cong \operatorname{Hom}(G/G',I)=0$,
since $G$ is simple. Therefore, the map $\alpha$ in (\ref{lc}) is an embedding, and $\operatorname{Ker}\varphi\cong H^1(G,V^0)/H^1(G,V)$.

The structure of $V$, see \cite[Lemma 2]{mor}, allows us to include $V^0$  in the nonsplit short exact sequence
$$
0\to U \to V^0 \to I \to 0,
$$
which gives rise to the exact sequence
\begin{equation}\label{lz}
H^0(G,V^0)\to H^0(G,I)\to H^1(G,U)\to H^1(G,V^0)\to H^1(G,I).
\end{equation}
Now, $H^0(G,V^0)=0$, since $V^0$ has no trivial submodules, and $H^1(G,I)=0$ as above.
Therefore, $H^1(G,V^0)\cong H^1(G,U)/H^0(G,I)$. Since $H^0(G,I)\cong \mathbb{Z}_r$, Lemma \ref{1coh} implies $H^1(G,V^0)\cong\mathbb{Z}_r$,
and so $\operatorname{Ker}\varphi$ is $0$ or $\mathbb{Z}_r$ according as $r$ divides $(q-1)/(n,q-1)$ or otherwise, by Lemma \ref{v1coh}.
It follows that the nonzero element of $H^2(G,I)$ that defines the nonsplit extension $S$ lies in $\operatorname{Ker}\varphi$ if and only if
$r$ does not divide $(q-1)/(n,q-1)$. By Lemma \ref{bas}, this completes the proof of the theorem.

In the case when $r$ divides $(q-1)/(n,q-1)$, we can also prove the nonembedding of $S$ into $V\leftthreetimes G$ in a different way. Suppose this is the case.
Then $\mathbb{F}_q$ has an element $a$ of  multiplicative order $r(n,q-1)$. Let $s$ be the image in $S$ of $\operatorname{diag}(a,a,\ldots,a,a^{1-n})$ under
the epimorphism $\operatorname{SL}_n(q)\to S$. We have $|s|=r^2$ and $|g|=r$, where $g$ is the image of $s$ under the epimorphism $S\to G$.
Observe that $\rho(g)$ has a fixed point on $\mathcal{P}$, because every diagonal element of $\operatorname{SL}_n(q)$ fixes a point on $\mathcal{P}$,
e.\,g. the projective image of the basis vector $(1,0,\ldots,0)$. Therefore,  $S$ cannot be a subextension of $V\leftthreetimes G$ by Lemma \ref{bur}.

\bigskip


\begin{thebibliography}{1}
\bibitem{mas} N.\,V. Maslova, {\em Finite groups with arithmetic restrictions on maximal subgroups}, Algebra and Logic, {\bf 54}, N\,1
    (2015), 65--69.

\bibitem{zse} A.\,V. Zavarnitsine, {\em Subextensions for a permutation ${\rm PSL}_2(q)$-module}, Sib. Elect. Math. Reports {\bf 10},
    (2013), 551–-557.

\bibitem{gru} K. W. Gruenberg,  {\em Cohomological topics in group theory}, Lect. Notes Math., {\bf 143}, Springer-Verlag,
    Berlin--New York, 1970, 275 p.

\bibitem{mor} B. Mortimer, {\em The modular permutation representations of the known doubly transitive groups},
Proc. London Math. Soc. (3), {\bf 41} (1980), 1--20.

\bibitem{bra} J. Bray, {\em A new family of modules with $2$-dimensional $1$-cohomology}, preprint, (2007), 5 p.

\bibitem{bur} V. P. Burichenko, {\em On some cohomologies of groups $\operatorname{L}_n(q)$}, abstract, Gomel (2009), 1 p.

\bibitem{wei} Ch. A. Weibel, {\em An introduction to homological algebra},
Camb. Studies Adv. Math., {\bf 38}, Camb. Univ. Press, Cambridge, 1994, 450 p.

\bibitem{maz} V. D. Mazurov, {\em Minimal permutation representations of finite simple classical groups. Special linear,
symplectic, and unitary groups}, Algebra and Logic, {\bf 32}, N\,3 (1993), 142--153.


\end{thebibliography}
\end{document}